\newtheorem{theorem}{Theorem}
\newtheorem{observation}[theorem]{Observation}
\long\def\symbolfootnote[#1]#2{\begingroup\def\thefootnote{\fnsymbol{footnote}}
\footnote[#1]{#2}\endgroup}
\begin{document}

\title{A note on forbidding clique immersions}
\author{
Matt DeVos\footnote{mdevos@sfu.ca. Supported in part by an NSERC Discovery Grant (Canada) and a Sloan Fellowship.}
\and
Jessica McDonald\footnote{jessica\textunderscore mcdonald@sfu.ca. Supported by an NSERC Postdoctoral Fellowship (Canada)}
\and
Bojan Mohar\footnote{mohar@sfu.ca. Supported in part by an NSERC Discovery Grant (Canada), by the Canada Research Chair program, and by the Research Grant J1--4106 of ARRS (Slovenia). On leave from: IMFM \& FMF, Department of Mathematics, University of Ljubljana, Ljubljana, Slovenia.}
\and
Diego Scheide\footnote{dscheide@sfu.ca}
\medskip\\
Department of Mathematics\\
Simon Fraser University\\
Burnaby, B.C., Canada V5A 1S6
}
\date{}

\maketitle

\begin{abstract}
Robertson and Seymour proved that the relation of graph immersion is well-quasi-ordered for finite graphs. Their proof uses the results of graph minors theory. Surprisingly, there is a very short proof of the corresponding rough structure theorem for graphs without $K_t$-immersions; it is based on the Gomory-Hu theorem. The same proof also works to establish a rough structure theorem for Eulerian digraphs without $\vec{K}_t$-immersions, where $\vec{K}_t$ denotes the bidirected complete digraph of order $t$.
\end{abstract}

\section{Introduction}

In this paper all graphs and digraphs are finite and may have loops and multiple edges, unless explicitly stated otherwise.

A pair of distinct adjacent edges $uv$ and $vw$ in a graph
are \emph{split off} from their common vertex $v$ by deleting the edges
$uv$ and $vw$, and adding the edge $uw$ (possibly in parallel to an existing edge,
and possibly forming a loop if $u = w$). A graph $H$ is
said to be \emph{immersed} in a graph $G$ if a graph isomorphic to
$H$ can be obtained from a subgraph of $G$ by splitting off pairs of
edges (and removing isolated vertices).
If $H$ is immersed in a graph $G$, then we also say
that $G$ has an \emph{$H$-immersion}.
An alternative definition is that $H$ is immersed in $G$ if there is
a 1-1 function $\phi: V(H)\to V(G)$ such that for each edge $uv\in E(H)$,
there is a path $P_{uv}$ in $G$ joining vertices $\phi(u)$ and $\phi(v)$,
and the paths $P_{uv}$, $uv\in E(H)$, are pairwise edge-disjoint.

Roberston and Seymour \cite{RS23} proved that the relation of graph immersion is a well-quasi-ordering, that is, for every infinite set of graphs, one of them can be immersed in another one. Their proof is based on a significant part of the graph minors project. It is perhaps surprising then that there is a very short proof of the corresponding rough structure theorem for graphs without $K_t$-immersions. Moreover, the same proof technique also works to prove a rough structure theorem for Eulerian digraphs without $\vec{K}_t$-immersions. Here, by $\vec{K}_t$ we mean the \emph{complete digraph} of order $t$, having $t$ vertices and a digon (pair of oppositely oriented edges) between each pair of vertices. By immersion in digraphs we mean the natural directed analogue of immersion in undirected graphs. That is, we say that a digraph $F$ is \emph{immersed\/} in a digraph $D$ if there is
a 1-1 function $\phi: V(F)\to V(D)$ such that for each edge $uv\in E(F)$,
there is a directed path $P_{uv}$ in $D$ from $\phi(u)$ to $\phi(v)$,
and the paths $P_{uv}$, $uv\in E(F)$, are pairwise edge-disjoint. Given a directed path $uvw$ of length two in a digraph, the pair of  edges $uv$, $vw$ are \emph{split off} from $v$ by deleting the edges $uv$ and $vw$, and adding the edge $uw$.

To state the two rough structure theorems explicitly, we first need the definition of a laminar family of edge-cuts. Given a graph or digraph $G$ and a vertex-set $X\subseteq V(G)$, we denote by $\delta(X)$ the edge-cut of $G$ consisting of all edges between $X$ and $V(G)\setminus X$ (in both directions). Two edge-cuts $\delta(X)$ and $\delta(Y)$ in a connected graph or digraph are \emph{uncrossed\/} if either $X$ or $V(G) \setminus X$ is contained in either $Y$ or $V(G) \setminus Y$.  Two edge-cuts in a general graph or digraph are \emph{uncrossed\/} if this holds in each component.  A family of pairwise uncrossed edge-cuts is called \emph{laminar}.

\begin{theorem}\label{undirstruct}
For every graph which does not contain an immersion of the complete graph $K_{t}$, there exists a laminar family of edge-cuts, each with size $< (t-1)^2$, so that every block of the resulting vertex partition has size less than $t$.
\end{theorem}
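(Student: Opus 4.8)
The plan is to read the laminar family off a Gomory--Hu tree of $G$ and then argue that a block of size at least $t$ would force a $K_t$-immersion. Write $\lambda_G(u,v)$ for the minimum size of an edge-cut of $G$ separating $u$ from $v$, and assume (splitting into components otherwise) that $G$ is connected. Let $T$ be a Gomory--Hu tree of $G$: a tree with vertex set $V(G)$ such that for every edge $e=xy$ of $T$, if $X_e$ denotes one of the two vertex sets of $T-e$ then $|\delta(X_e)|=\lambda_G(x,y)$, and moreover $\lambda_G(u,v)$ equals the minimum of $|\delta(X_e)|$ over the edges $e$ on the $u$--$v$ path of $T$. Let $\mathcal{L}$ consist of those cuts $\delta(X_e)$, $e\in E(T)$, with $|\delta(X_e)|<(t-1)^2$. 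Every member of $\mathcal{L}$ has size less than $(t-1)^2$ by construction, and a short check using the tree structure of $T$ (the standard fact that Gomory--Hu cuts are pairwise uncrossed) shows that $\mathcal{L}$ is laminar. The blocks of the partition it induces are precisely the vertex sets of the components of the forest obtained from $T$ by deleting every edge $e$ with $|\delta(X_e)|<(t-1)^2$; so the whole theorem reduces to showing each such block has fewer than $t$ vertices. Suppose some component $T'$ of that forest had at least $t$ vertices. Then every edge of $T$ inside $T'$ has cut-value at least $(t-1)^2$, and for any $t$ vertices $S\subseteq V(T')$ and distinct $u,v\in S$ the $u$--$v$ path of $T$ lies inside the subtree $T'$, so $\lambda_G(u,v)\ge(t-1)^2$. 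Hence it suffices to prove the routing lemma: \emph{if a graph contains a set $S$ of $t$ vertices with $\lambda_G(u,v)\ge(t-1)^2$ for all distinct $u,v\in S$, then it contains a $K_t$-immersion with branch vertices $S$}; applying this to $G$ and $S$ contradicts the hypothesis on $G$.

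I would prove the routing lemma by induction on $t$, the cases $t\le 2$ being immediate. Given $S=\{u_1,\dots,u_t\}$ with all pairwise edge-connectivities at least $(t-1)^2$, first route a ``star at $u_1$'': pairwise edge-disjoint paths $Q_2,\dots,Q_t$ with $Q_j$ joining $u_1$ to $u_j$. Such a family exists by a one-line flow argument: adjoin a new vertex $z$ with one edge from $z$ to each of $u_2,\dots,u_t$; any cut separating $u_1$ from $z$ either contains all $t-1$ new edges or, since then some $u_j$ is on the $z$-side, induces a $u_1$--$u_j$ cut of $G$ of size at least $(t-1)^2$, so $\lambda(u_1,z)\ge t-1$, and $t-1$ edge-disjoint $u_1$--$z$ paths restrict to the desired star. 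Let $F$ be the edge set used by the star. One then wants to apply the inductive hypothesis to $G-F$ with branch set $\{u_2,\dots,u_t\}$, which requires $\lambda_{G-F}(u_i,u_j)\ge(t-2)^2$ for all $i,j\ge 2$. Since $(t-1)^2-(t-2)^2=2t-3$, it is enough to know that deleting $F$ lowers each such connectivity by at most $2t-3$; this holds if the $t-1$ paths of the star are chosen so that each of them meets every relevant edge-cut at most once, because then a cut of size at least $(t-1)^2$ separating $u_i$ from $u_j$ loses at most $t-1\le 2t-3$ of its edges to $F$. Splicing the star at $u_1$ onto the $K_{t-1}$-immersion produced by induction yields the $K_t$-immersion, and the constant is exactly right because this budget survives all the way down the induction, $(t-1)^2$ being at least $\binom{t}{2}$ for every $t\ge 2$.

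The Gomory--Hu setup, the laminarity check, and the flow argument for the existence of the star are all routine. The genuinely delicate point, and where I would expect to do the real work, is the claim that the star can be routed so that deleting it costs only $O(t)$ out of each pairwise edge-connectivity inside $S$: a priori a single path may cross a fixed edge-cut many times, so establishing the ``meets each relevant cut at most once'' bound---or some amortised substitute for it, perhaps by routing all of the required paths coherently rather than one star at a time, or by choosing the system to minimise total length and applying an uncrossing argument---is the crux of the proof.
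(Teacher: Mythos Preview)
Your Gomory--Hu setup and the reduction to the routing lemma are exactly what the paper does. The divergence is in how the routing lemma is proved, and your inductive strategy has a real gap that you yourself flag: there is no reason the $t-1$ star paths can be chosen so that each crosses every relevant min-cut at most once. A single $u_1$--$u_j$ path may weave across a fixed cut $\delta(X)$ arbitrarily many times, and neither a shortest-path choice nor a naive uncrossing argument controls this simultaneously for all the min-cuts separating pairs in $S$. Without that bound the budget $(t-1)^2-(t-2)^2=2t-3$ can be blown by a single star, and the induction collapses. I do not see a light repair; making this work would amount to a nontrivial structural statement about routing paths transversally to a laminar family of min-cuts, which is far more than what the theorem needs.

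The paper sidesteps the whole difficulty. Instead of peeling off one star and recursing, it routes everything from a single source in one shot: using exactly your auxiliary-vertex trick but with $t-1$ parallel edges from $z$ to each of $v_2,\dots,v_t$, Menger gives $(t-1)^2$ pairwise edge-disjoint paths from $v_1$, exactly $t-1$ of them ending at each $v_j$. This is an immersion of the graph $H_t$ obtained from $K_{1,t-1}$ by replacing every edge by $t-1$ parallel edges. The remaining step is a purely combinatorial observation (due to Seymour and Wollan): $H_t$ itself immerses $K_t$, by pairing, for each $2\le i<j\le t$, one $v_1$--$v_i$ edge with one $v_1$--$v_j$ edge to form the $v_i$--$v_j$ path, leaving one $v_1$--$v_j$ edge unused for each $j$ to serve as the $v_1$--$v_j$ path. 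No induction, no control on how paths interact with cuts, and the constant $(t-1)^2$ falls out exactly.
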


\begin{theorem}\label{dirstruct}
For every Eulerian digraph $D$ which does not contain an immersion of $\vec{K}_t$, there exists a laminar family of edge-cuts, each with size $< 2t(t -1)$, so that every block of the resulting partition has size less than $t$.
\end{theorem}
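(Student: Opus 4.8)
The plan is to imitate the short Gomory--Hu argument behind Theorem~\ref{undirstruct}, using the key observation that in an \emph{Eulerian} digraph the directed-cut function behaves like an undirected one. For $X\subseteq V(D)$ put $f(X):=|\delta^+(X)|=\tfrac12|\delta(X)|$. Since $D$ is Eulerian, $|\delta^+(X)|=|\delta^-(X)|$ for every $X$, so $f$ is symmetric ($f(X)=f(V(D)\setminus X)$) as well as submodular, and consequently the local arc-connectivity $\lambda(u,v):=\min\{f(X):u\in X\not\ni v\}$ is symmetric in $u$ and $v$. Thus $(V(D),f)$ admits a Gomory--Hu tree: a tree $T$ on $V(D)$ such that for every edge $e=xy$ of $T$, writing $S_e$ for the vertex set of a component of $T-e$, the cut $\delta(S_e)$ is a minimum cut separating $x$ from $y$ (so $f(S_e)=\lambda(x,y)$), and such that $\{S_e:e\in E(T)\}$ is laminar.

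Given such a tree I would take $\mathcal L:=\{\,\delta(S_e):e\in E(T),\ f(S_e)<t(t-1)\,\}$. This is a laminar family, and each of its cuts has size $|\delta(S_e)|=2f(S_e)<2t(t-1)$, exactly as required. The blocks of the partition induced by $\mathcal L$ are precisely the vertex sets of the components of the forest obtained from $T$ by deleting the edges with $f(S_e)<t(t-1)$; equivalently, two vertices $u,v$ lie in a common block exactly when the $u$--$v$ path of $T$ uses only edges $e$ with $f(S_e)\ge t(t-1)$, in which case $\lambda(u,v)=\min_e f(S_e)\ge t(t-1)$. So it suffices to prove the following \emph{engine lemma}: if $D$ is Eulerian and $v_1,\dots,v_t$ are distinct vertices with $\lambda(v_i,v_j)\ge t(t-1)$ for all $i\ne j$, then $D$ has a $\vec{K}_t$-immersion with branch vertices $v_1,\dots,v_t$; a block of size at least $t$ would then produce the forbidden immersion. (The degenerate situations are easy: if $|V(D)|<t$ take $\mathcal L=\varnothing$, and if $D$ is disconnected argue component by component, using that a union of laminar families, one per component, is laminar.)

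To prove the engine lemma I would first reduce to the case $V(D)=\{v_1,\dots,v_t\}$. While some vertex $w$ lies outside $\{v_1,\dots,v_t\}$, apply Mader's directed splitting-off theorem at $w$: as $d^+(w)=d^-(w)$, the vertex $w$ admits a complete splitting off after which the digraph is still Eulerian and all local arc-connectivities among the remaining vertices are unchanged. Iterating, we obtain an Eulerian digraph $D'$ on $\{v_1,\dots,v_t\}$ with $\lambda_{D'}(v_i,v_j)\ge t(t-1)$ for all $i\ne j$, equivalently $|\delta^+_{D'}(X)|\ge t(t-1)$ for every $\varnothing\ne X\subsetneq V(D')$; moreover a $\vec{K}_t$-immersion of $D'$ lifts back (undoing the splittings, then replacing walks by paths) to one of $D$. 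It remains to realise the $t(t-1)=2\binom{t}{2}$ arcs of $\vec{K}_t$ as pairwise arc-disjoint directed paths in $D'$. This is an integral directed routing problem whose cut condition holds with room to spare---across any $X$ the demand is $|X|\,|V(D')\setminus X|\le\lfloor t^2/4\rfloor<t(t-1)\le|\delta^+_{D'}(X)|$---and I would carry the routing out by repeatedly extracting a directed path for an unrouted pair, each time checking that the remaining arc-connectivity between every still-unserved pair exceeds the demand yet to be pushed across the corresponding cut; the factor of two hidden in the bound $2t(t-1)$ is exactly the slack that keeps this bookkeeping valid.

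The hard part is this final routing step. Directed edge-disjoint-path problems are in general intractable, and what makes the present instance manageable is the conjunction of three features: the Eulerian hypothesis, the reduction (via splitting off) to a digraph on only $t$ vertices, and the generous gap between the guaranteed arc-connectivity $t(t-1)$ and the total demand crossing any cut. Deciding the order in which to pull out the paths, and bounding how many arcs of a worst-case minimum cut an extracted path can consume, is where essentially all the effort---and the precise value of the constant---resides; the Gomory--Hu scaffolding in the first two steps is, by comparison, routine once one notices that an Eulerian digraph has a symmetric submodular cut function.
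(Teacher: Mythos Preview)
Your Gomory--Hu scaffolding and the reduction via Mader's splitting-off theorem are exactly what the paper does; up to the point where you have an Eulerian digraph $D'$ on $\{v_1,\dots,v_t\}$ that is strongly $t(t-1)$-arc-connected, your argument is complete and matches the paper line for line.

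The gap is the engine lemma itself, which you explicitly leave unfinished. Your proposed greedy extraction of paths is not justified: when you pull out a directed $v_i$--$v_j$ path in $D'$, that path may visit every vertex of $D'$ and can remove several arcs from \emph{each} side of a given cut, so the invariant ``remaining connectivity exceeds remaining demand'' does not obviously survive even a single extraction. The factor of two you point to is just the identity $|\delta(X)|=2|\delta^+(X)|$ for Eulerian digraphs, not usable slack in the routing; and the fact that the multicommodity cut condition holds (your inequality $|X|\,|V(D')\setminus X|\le\lfloor t^2/4\rfloor<t(t-1)$) does not by itself yield arc-disjoint directed paths, since directed multicommodity flow has no integral max-flow/min-cut theorem.

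The paper closes this gap in one stroke with Edmonds' branching theorem. Since $D'$ is strongly $t(t-1)$-arc-connected, one can pack $t(t-1)$ pairwise arc-disjoint spanning arborescences in $D'$ with any prescribed multiset of roots---in particular with exactly $t-1$ of them rooted at each $v_i$. From the $t-1$ arborescences rooted at $v_i$ one reads off $t-1$ arc-disjoint directed paths from $v_i$ to the $t-1$ other vertices (one path per arborescence, one target per arborescence), and over all $i$ these $t(t-1)$ paths are pairwise arc-disjoint and realise the required $\vec{K}_t$-immersion. This replaces your open-ended routing argument with a single citation, and is precisely what makes the proof short.
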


We will show that the Gomory-Hu Theorem (stated in Section 2) yields very easy proofs of the Theorems \ref{undirstruct} and \ref{dirstruct}. In the undirected case we originally had somewhat lesser bounds.  Theorem \ref{undirstruct} as stated above is due to Seymour and Wollan \cite{SeW}, whose work we learned of later. The discrepancy resulted from our use of an approximation of Kriesell's Conjecture \cite{Kr}, which can be replaced in our argument with the following observation of theirs.

\begin{observation}\label{SW} Let $H_t$ be the graph obtained from $K_{1,t-1}$ by replacing each edge with $t-1$ parallel edges. Then $H_t$ has a $K_{t}$-immersion.
\end{observation}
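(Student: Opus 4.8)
The plan is to verify the immersion directly from the path definition, using essentially the only available vertex map. Write $c$ for the centre of $K_{1,t-1}$ and $v_1,\dots,v_{t-1}$ for its leaves, so that in $H_t$ there are exactly $t-1$ parallel edges between $c$ and each $v_i$, the centre $c$ has degree $(t-1)^2$, and each leaf has degree $t-1$. Since $|V(H_t)| = t = |V(K_t)|$, any immersion of $K_t$ in $H_t$ is witnessed by a bijection; I would take $\phi$ sending one vertex of $K_t$ to $c$ and the remaining $t-1$ vertices bijectively to $v_1,\dots,v_{t-1}$. Identify $V(K_t)$ with $\{0,1,\dots,t-1\}$ so that $\phi(0)=c$ and $\phi(i)=v_i$ for $1 \le i \le t-1$.

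Next I would assign the paths. For an edge $0i$ of $K_t$, route $P_{0i}$ along a single parallel edge between $c$ and $v_i$. For an edge $ij$ of $K_t$ with $1 \le i < j \le t-1$, the vertices $v_i$ and $v_j$ have no common neighbour other than $c$, so $P_{ij}$ must have the form $v_i\,c\,v_j$; it uses one parallel edge between $c$ and $v_i$ and one between $c$ and $v_j$. The whole content of the statement is that these choices can be made edge-disjointly, and the key point is an exact count at each leaf: the paths meeting a fixed leaf $v_i$ are precisely $P_{0i}$ together with $P_{ij}$ for the $t-2$ indices $j \neq i$, a total of $t-1$ paths, each needing exactly one of the $t-1$ parallel edges at $v_i$. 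Concretely, label the $t-1$ parallel edges between $c$ and $v_i$ by the $t-1$ elements of $\{0,1,\dots,t-1\}\setminus\{i\}$, and let $P_{ij}$ use, at its $v_i$-end, the edge labelled $j$ (and, when $j \ge 1$, at its $v_j$-end the edge labelled $i$).

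Finally I would check edge-disjointness, which is now immediate: at each leaf $v_i$ the labelling is a bijection, so its parallel edges are used by pairwise distinct paths, and since parallel edges at different leaves are distinct edges of $H_t$, no edge of $H_t$ is used twice. (In particular the $(t-1)^2$ edges at $c$ are each used exactly once, matching the count $(t-1) + 2\binom{t-1}{2} = (t-1)^2$.) Hence $\phi$ together with the paths $P_{uv}$ exhibits a $K_t$-immersion in $H_t$. I do not expect a genuine obstacle here; the only thing to get right is the bijective labelling that makes the per-leaf count close, and once the vertex map is fixed there is essentially no remaining freedom.
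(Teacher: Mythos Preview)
Your proof is correct and is essentially the same as the paper's: both use the identity map on vertices, label the $t-1$ parallel edges at each leaf by the indices of the other $t-1$ vertices, and route $P_{ij}$ through the centre using the edge labelled $j$ at leaf $i$ and the edge labelled $i$ at leaf $j$. Your version spells out the edge-disjointness check a bit more explicitly, but the construction and the key bijective labelling are identical up to a shift of indices.
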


\begin{proof} Let $v_1$ be the vertex of degree $(t-1)^2$ and let $v_2, \ldots v_t$ be the vertices of degree $t-1$. Label the $t-1$ edges between $v_1$ and $v_j$ by $\{e_{j,1},e_{j,2}, \ldots e_{j, t} \} \setminus e_{j,j}$ for $2 \le j \le t$. Then we have an immersion of $K_t$ on the vertices $v_1, \ldots, v_{t}$, where the requisite paths between $v_1$ and $v_2, \ldots v_{t}$ are given by $e_{2,1}, \ldots e_{t,1}$, and for every pair of vertices $v_i, v_j$ with $1<i< j\leq t$, the path between $v_i$ and $v_j$ is given by $e_{ij}e_{ji}$.\end{proof}

The bounds of Theorems \ref{undirstruct} and \ref{dirstruct} provide rough structure to the same extent. Namely, while the laminar family of Theorem \ref{undirstruct}  does not prohibit  a $K_t$-immersion, it does indeed prohibit a $K_{t^2}$-immersion. Analogously, the laminar family of Theorem \ref{dirstruct} also prohibits a $\vec{K}_{t^2}$-immersion.

That we restrict ourselves to Eulerian digraphs in Theorem \ref{dirstruct} should not be surprising. First of all, let us observe that digraph immersion is not a well-quasi-order in general. Furthermore, the present authors have exhibited in \cite{DMMS} that there exist simple non-Eulerian digraphs with all vertices of arbitrarily high in- and outdegree which do not contain even a $\vec{K}_3$-immersion. (Here, by simple digraph we mean a digraph $D$ with no loops and  at most one edge from $x$ to $y$ for any $x,y \in V(D)$, but where digons are allowed).
On the positive side, it has been shown by Chudnovsky and Seymour \cite{ChSe} that digraph immersion is a well-quasi-order for tournaments. That Eulerian digraphs of maximum outdegree $k$ are well-quasi-ordered by immersion was proved (although not written down) by Thor Johnson as part of his PhD thesis \cite{Jo}.

In \cite{DDFMMS}, the present authors along with Fox and Dvo\v{r}\'{a}k, proved that (undirected) simple graphs with minimum degree at least $200t$ contain a $K_t$-immersion.
In another paper \cite{DMMS}, the following positive result is obtained for digraphs when the Eulerian condition is added.

\begin{theorem} \emph{\cite{DMMS}}
\label{mindeg}
Every simple Eulerian digraph with minimum outdegree at least $t(t-1)$ contains an immersion of $\vec{K}_t$.
\end{theorem}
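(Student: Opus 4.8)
The plan is to combine a connectivity argument, which locates $t$ vertices that are pairwise highly edge-connected, with a routing argument that produces the required edge-disjoint paths among them; the constant $t(t-1)=2\binom{t}{2}$ is chosen precisely so that both steps go through with a little room to spare.

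For the first step, observe that since $D$ is simple and Eulerian with $\delta^+(v)\ge t(t-1)$ for every $v$, every vertex set $X$ with $1\le |X|\le t-1$ has a large out-cut: each of the $\le t-1$ vertices of $X$ sends at most $|X|-1$ of its $\ge t(t-1)$ out-edges to other vertices of $X$, so $|\delta^+(X)|\ge |X|\bigl(t(t-1)-|X|+1\bigr)\ge t(t-1)$, the last inequality reducing to $|X|\le t(t-1)$. By Eulerianness the same bound holds for the out-cuts of all sets of co-size at most $t-1$. Now take a Gomory--Hu tree of $D$ (as in Section 2) and delete every edge of weight less than $t(t-1)$. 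If some component of the remaining forest has at least $t$ vertices we are done with this step, as those vertices are pairwise $(t(t-1))$-edge-connected in $D$. Otherwise every component has at most $t-1$ vertices; choose a component $C$ joined to the rest by a single deleted tree edge $e$, so that one side of $T-e$ is exactly $C$. Then the Gomory--Hu cut associated with $e$ is $\delta(C)$, whose value is $|\delta^+(C)|\ge t(t-1)$ by the previous sentence -- contradicting that $e$ was deleted. Hence a set $S$ of $t$ vertices with $\lambda_D(u,v)\ge t(t-1)$ for all $u,v\in S$ exists.

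For the routing step I would apply Mader's directed splitting-off theorem repeatedly at the vertices outside $S$: a complete splitting at such a vertex preserves $\lambda(u,v)$ for all $u,v\in S$ and allows the vertex to be deleted, so we reach an Eulerian digraph $D^\ast$ with $V(D^\ast)=S$ (discard the loops that arise; parallel edges are harmless) in which still $\lambda_{D^\ast}(u,v)\ge t(t-1)$ for all pairs, hence $D^\ast$ is $t(t-1)$-arc-connected. A $\vec{K}_t$-immersion in $D^\ast$ lifts back to one in $D$, since each split-off edge expands to an internally disjoint detour while edge-disjointness is maintained. So it remains to find, in the Eulerian $t(t-1)$-arc-connected digraph $D^\ast$ on $t$ vertices, edge-disjoint directed paths realising all $t(t-1)$ ordered pairs of $V(D^\ast)$. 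The natural way to package this is to produce $\binom{t}{2}$ pairwise edge-disjoint closed directed walks, the one indexed by $\{i,j\}$ passing through both $v_i$ and $v_j$: such a closed walk splits into an edge-disjoint $v_i\to v_j$ path and $v_j\to v_i$ path, and the $\binom{t}{2}$ walks together yield all $t(t-1)$ paths.

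This last packing in $D^\ast$ is the step I expect to be the main obstacle. A greedy construction fails badly, since removing one closed walk can drop $\lambda$ by as much as $t-1$ while we must remove $\binom{t}{2}$ of them, so it has to be done in one shot. I would look for a directed Eulerian analogue of the Nash--Williams--Tutte spanning-tree packing theorem -- that a $2k$-arc-connected Eulerian digraph contains $k$ edge-disjoint spanning connected Eulerian subgraphs -- which with $k=\binom{t}{2}$ would finish the proof; lacking a clean black box, one can instead route directly on $D^\ast$, which is very dense ($\ge t^2(t-1)$ edges on $t$ vertices, hence many parallel edges between some ordered pairs), by a short case analysis on which ordered pairs admit no direct edge. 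Obtaining the constant exactly $t(t-1)$, rather than something larger, is the delicate point here, mirroring the fact that $2\binom{t}{2}$-edge-connectivity is exactly what the spanning-tree argument needs in the undirected case.
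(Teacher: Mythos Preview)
Your overall framework is exactly the paper's: a counting argument to bound small cuts, a Gomory--Hu tree to locate $t$ pairwise highly connected vertices, and Mader splitting to reduce to a $t(t-1)$-arc-connected Eulerian digraph $D^\ast$ on those $t$ vertices. The only substantive difference is the final routing step, and there your proposal has a genuine gap.

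You correctly flag the packing in $D^\ast$ as the main obstacle, and you reach for an ``Eulerian analogue of Nash--Williams--Tutte'' (that a $2k$-arc-connected Eulerian digraph decomposes into $k$ edge-disjoint spanning connected Eulerian subgraphs). That statement is not a standard black box, and you offer no proof of it; the fallback ``short case analysis on which ordered pairs admit no direct edge'' is not an argument either. What actually closes the proof is Edmonds' branching theorem: a strongly $k$-arc-connected digraph contains $k$ edge-disjoint spanning out-arborescences with any prescribed multiset of roots. Applied with $k=t(t-1)$ and roots $v_1,\ldots,v_t$ each repeated $t-1$ times, you obtain $t-1$ edge-disjoint arborescences rooted at each $v_i$; from the $j$th arborescence rooted at $v_i$ take the unique $v_i\to v_j$ path. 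These $t(t-1)$ paths are pairwise edge-disjoint and realise all ordered pairs, giving the $\vec{K}_t$-immersion at once. No closed-walk packing, no Eulerian-subgraph decomposition, and no case analysis is needed, and the constant $t(t-1)$ falls out exactly.

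(A minor point: your Gomory--Hu threshold of $t(t-1)$ and your identification of the cut value with $|\delta^+(C)|$ implicitly use a directed Gomory--Hu tree, which is fine for Eulerian digraphs but differs by a factor of two from Section~2's undirected tree; the paper works with the underlying graph and threshold $2t(t-1)$.)
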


Theorem \ref{mindeg} is in fact directly implied by Theorem \ref{dirstruct}. This provides an alternate proof of Theorem \ref{mindeg} to that given in \cite{DMMS}. To see this, suppose that a simple Eulerian digraph $D$ has minimum outdegree at least $t(t-1)$, and contains a set $S\subseteq V(D)$ of  $|S|=s$ vertices with $|\delta(S)| < 2t(t-1)$. Then
$$s\cdot t(t-1) \leq |E[S]|+\tfrac{1}{2}|\delta(S)| < 2s(s-1)+t(t-1).$$
This implies that $s>t(t-1)/2$. When $t>2$, this tells us that $s>t$, and hence Theorem \ref{dirstruct} guarantees that $D$ has a $\vec{K}_t$-immersion.

\section{Proofs}

An edge-cut $\delta(X)$ in a graph $G$ is said to \emph{separate} a pair of vertices $x,y\in V(G)$ if $x\in X$ and $y\in V(G)\setminus X$ (or vice versa). Given a tree $F$ and an edge $e\in E(F)$, there exists $X\subseteq V(G)$ such that $\delta(X)=e$; $\delta(X)$ is called a \emph{fundamental cut\/} in $F$ and is associated with the vertex partition $\{X, V(F)\setminus X\}$ of $V(F)$.

\begin{theorem}[Gomory-Hu \cite{GH}]
For every graph $G$, there exists a tree $F$ with vertex set $V(G)$ and a function $\mu : E(F) \rightarrow {\mathbb Z}$ so that the following hold.
\begin{itemize}
\item For every edge $e\in E(F)$ we have that $\mu(e)$ equals the size of the edge-cut of $G$ given by the vertex partition associated with the fundamental cut of $e$ in the tree $F$.
\item For every $u,v \in V(G)$ the size of the smallest edge-cut of $G$ separating $u$ and $v$ is the minimum of $\mu(e)$ over all edges $e$ on the path in $F$ from $u$ to $v$.
\end{itemize}
\end{theorem}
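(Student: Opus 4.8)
The plan is to construct $F$ as a \emph{Gomory--Hu tree}, built up by repeatedly refining a partition of $V(G)$. I would maintain a tree $T$ whose nodes are pairwise disjoint nonempty ``bags'' whose union is $V(G)$, together with an integer labelling $\mu$ of $E(T)$, subject to the following invariant: for every edge $e$ of $T$, deleting $e$ splits $V(G)$ into the two sets $A_e$ and $B_e$ formed by the bags on each side, and there are representatives $a\in A_e$ and $b\in B_e$ lying in the two bags incident to $e$ such that $\delta(A_e)$ is a \emph{minimum} $a$--$b$ edge-cut of $G$ and $\mu(e)=|\delta(A_e)|$. Initially $T$ is the single bag $V(G)$ with no edges, so the invariant is vacuous; the process stops once every bag is a singleton, at which point $T$ is a tree on $V(G)$ and the first bullet of the theorem holds by construction, while integrality of $\mu$ is automatic since each label is the size of an edge-cut.

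Granting the invariant for the final $T$, the second bullet follows quickly. Write $\lambda(x,y)$ for the minimum size of an $x$--$y$ cut in $G$. On one hand, every edge $e$ on the $u$--$v$ path of $T$ gives an edge-cut separating $u$ from $v$, so $\lambda(u,v)\le\min_e\mu(e)$. On the other hand, one has the ultrametric-type inequality $\lambda(x,z)\ge\min\{\lambda(x,y),\lambda(y,z)\}$ for all $x,y,z$ (a minimum $x$--$z$ cut separates $y$ from one of $x$ and $z$), and iterating it along the $u$--$v$ path of $T$ gives $\lambda(u,v)\ge\min_e\mu(e)$, using that when all bags are singletons the representatives of each path edge $e$ are exactly its two endpoints, so $\mu(e)=\lambda$ of them.

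The refinement step I would use is the following. Pick a bag $R$ with $|R|\ge 2$ and two vertices $s,t\in R$. Form the auxiliary graph $G_R$ from $G$ by contracting, for each edge $e$ of $T$ incident to $R$, all of the vertices lying on the side of $e$ away from $R$ into a single vertex (discarding resulting loops). Take a minimum $s$--$t$ cut $\delta_{G_R}(U)$ of $G_R$ with $s\in U$; pulled back to $G$ this is a cut $\delta(Z)$ that splits none of those far sides. Then replace $R$ in $T$ by the two bags $R\cap U$ and $R\setminus U$ joined by a new edge of label $|\delta(Z)|$, and reattach each former neighbour bag of $R$ to whichever of the two new bags lies on its side of $\delta(Z)$, keeping its old label. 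Since this strictly increases the number of bags, the procedure terminates.

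The main obstacle — essentially the only nonroutine point — is verifying that the invariant survives this step, and this rests on the standard uncrossing lemma: if $\delta(X)$ is a minimum $p$--$q$ cut and $\delta(Y)$ a minimum $p'$--$q'$ cut with $p',q'\in X$, then one of $\delta(X\cap Y)$ and $\delta(X\setminus Y)$ is a minimum $p'$--$q'$ cut contained in $X$; this is immediate from submodularity of $|\delta(\cdot)|$. From it one first deduces that a minimum $s$--$t$ cut of $G_R$ has the same size as a minimum $s$--$t$ cut of $G$: starting from a minimum $s$--$t$ cut of $G$, uncross it in turn against the far-side cuts $\delta(A_e)$, which are minimum cuts by the invariant, so as to make it split no far side, each uncrossing not increasing its size; hence $\delta(Z)$ is a genuine minimum $s$--$t$ cut of $G$, giving the invariant for the new edge. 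For each reattached old edge one applies the uncrossing lemma again, now with $\delta(Z)$ as the minimum $s$--$t$ cut, to see that the relevant part of $\delta(A_e)$ is still a minimum cut between suitable representatives lying in the incident bags. The real work is the bookkeeping of which of $s$ and $t$ plays the separated roles in each application; the rest is immediate.
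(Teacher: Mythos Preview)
The paper does not prove this statement at all: it is quoted as the classical Gomory--Hu theorem, cited to \cite{GH}, and used as a black box in the proofs of Theorems~\ref{undirstruct} and~\ref{dirstruct}. So there is no ``paper's own proof'' to compare against.

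That said, your proposal is a faithful outline of the standard Gomory--Hu construction and is essentially correct. The invariant you maintain (each tree edge records a genuine minimum cut between representatives in its two incident bags), the refinement step via contraction of far-side components and a min $s$--$t$ cut in the auxiliary graph, and the appeal to submodularity for the uncrossing lemma are exactly the ingredients of the original proof. Your derivation of the second bullet from the ultrametric inequality $\lambda(x,z)\ge\min\{\lambda(x,y),\lambda(y,z)\}$ is also the usual argument. The one place that deserves more care than you give it is the reattachment step: when an old edge $e$ had its representative in $R$ on, say, the $t$-side, but the far-side component gets attached to the $s$-side bag, you need to argue that $s$ itself (or some vertex of $R\cap U$) can serve as a new representative for $e$ with the same cut value; this is again a submodularity computation, but it is a genuine check rather than pure bookkeeping. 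With that caveat, the sketch is sound.
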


\bigskip

\noindent{\it Proof (Theorem \ref{undirstruct}):}
Let $G$ be an arbitrary graph, and apply the Gomory-Hu Theorem to choose a tree $F$ on $V(G)$ and an associated function $\mu$.  Let ${\mathcal C}$ be the family of edge-cuts of $G$ that are associated with edges $e \in E(F)$ for which $\mu(e) < (t-1)^2$. We show that if any block of the resulting vertex partition has size $\geq t$, than $G$ contains a $K_t$-immersion. To this end, suppose we have such a block with distinct vertices $ v_1, v_2 \ldots, v_t$. Then every edge-cut separating these $t$ vertices has size $\ge (t-1)^2$. Hence, by Menger's Theorem, there exist $(t-1)^2$ edge disjoint paths starting at $v_1$ so that exactly $t-1$ of them end at each $v_j$ for $2 \le j \le t$. (To see this, consider adding an auxiliary vertex $w$ to the graph, adding $t-1$ parallel edges between $w$ and each of $v_2, \ldots, v_{t}$, and then applying the standard version of Menger's Theorem to $v$ and $w$.)  Hence $G$ immerses the graph $H_t$ as in Observation \ref{SW}.\hfill$\Box$

\bigskip

The proof of Theorem \ref{dirstruct} requires two additional classical theorems.

\begin{theorem}[Mader \cite{Ma}]\label{mader}
If $D$ is an Eulerian digraph and $v \in V(D)$ has nonzero degree, then there exists a pair of edges that can be split off from $v$ so that for all $u,u' \in V(D) \setminus \{v\}$, the size of the smallest edge-cut separating $u$ and $u'$ does not change.
\end{theorem}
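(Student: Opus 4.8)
\bigskip

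\noindent\emph{Proof proposal.}
My plan is to run the classical splitting-off argument of Mader, which extends Lov\'asz's undirected splitting-off lemma. First I would record the easy preliminaries. Since $D$ is Eulerian every vertex is balanced, so $d^+(v)=d^-(v)=:d\ge 1$; splitting off an in-edge $uv$ and an out-edge $vw$ keeps $v,u,w$ balanced and so produces another Eulerian digraph $D'$ (which is what later allows the statement to be iterated until $v$ has degree zero); and if $v$ lies on a loop, splitting that loop off together with any other edge at $v$ changes no edge-cut of $D$, so I may assume $v$ has no loop. Writing $\lambda(x,y)=\min\{\,|\delta(X)|:x\in X,\ y\notin X\,\}$ for the size of a smallest edge-cut separating $x$ and $y$, I would then compare $\delta_{D'}(X)$ with $\delta_D(X)$ edge by edge and verify that $|\delta_{D'}(X)|=|\delta_D(X)|-2$ exactly when $v\in X$ and $u,w\notin X$ (equivalently $v\notin X$ and $u,w\in X$), and $|\delta_{D'}(X)|=|\delta_D(X)|$ otherwise. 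Hence $\lambda_{D'}(x,y)\le\lambda(x,y)$ for all $x,y\ne v$, and it suffices to find a pair $(uv,vw)$ whose split-off strictly decreases none of these quantities.

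Next I would call a set $Z$ \emph{dangerous} if $v\in Z$, $\{v\}\subsetneq Z\subsetneq V(D)$, and
\[
|\delta(Z)|\ \le\ 1+\max\bigl\{\,\lambda(a,b):a,b\in V(D)\setminus\{v\},\ Z\text{ separates }a\text{ and }b\,\bigr\}
\]
(the maximum being over a nonempty set, since $Z$ and $V(D)\setminus Z$ each contain a vertex other than $v$). The edge-by-edge comparison then yields the key equivalence: splitting off $uv$ and $vw$ strictly decreases some $\lambda(a,b)$ with $a,b\ne v$ if and only if some dangerous set $Z$ has $u\notin Z$ and $w\notin Z$. So I would argue by contradiction, assuming that every in-edge $uv$ and every out-edge $vw$ is blocked by a dangerous set avoiding $u$ and $w$. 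Here the Eulerian hypothesis becomes indispensable (and without it the theorem is false): because $|\delta(X)|=2|\delta^+(X)|$, where $\delta^+(X)$ is the set of edges leaving $X$, the cut function $X\mapsto|\delta(X)|$ is symmetric and submodular, hence also posimodular ($|\delta(X)|+|\delta(Y)|\ge|\delta(X\setminus Y)|+|\delta(Y\setminus X)|$), so the complements of dangerous sets form a family amenable to uncrossing exactly as in Lov\'asz's lemma.

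To produce the contradiction I would fix one in-edge $e_0=u_0v$. For each out-neighbour $w$ of $v$, the pair $(u_0v,vw)$ is blocked, so there is a dangerous $Z_w$ with $u_0\notin Z_w$ and $w\notin Z_w$; the complements $\overline{Z_w}$ all contain $u_0$, none contains $v$, and any two of them meet. Using submodularity and posimodularity of $|\delta(\cdot)|$, the union of two such sets is again the complement of a dangerous set containing $u_0$ and omitting $v$ --- except when the two complements already exhaust $V(D)\setminus\{v\}$, a case I would have to treat on its own. Taking $\overline Z:=\bigcup_w\overline{Z_w}$ and uncrossing repeatedly, I would obtain a dangerous set $Z$ with $u_0\notin Z$ and with $w\notin Z$ for every out-neighbour $w$ of $v$. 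Now set $Z':=Z\setminus\{v\}$: every out-edge of $v$ runs from $Z$ to $V(D)\setminus Z$ (its head, an out-neighbour of $v$, lies outside $Z$) and the in-edge $u_0v$ runs from $V(D)\setminus Z$ into $Z$, so pushing $v$ out of $Z$ strictly shrinks the cut, and a direct count gives $|\delta(Z')|=|\delta(Z)|-2p$ with $p\ge 1$ the number of in-edges of $v$ whose tail lies outside $Z$. But $Z'$ still separates the two vertices $a,b\ne v$ witnessing the dangerousness of $Z$, so $|\delta(Z')|\ge\lambda(a,b)\ge|\delta(Z)|-1$, contradicting $|\delta(Z')|\le|\delta(Z)|-2$.

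The step I expect to be the real obstacle is the uncrossing above: one has to control how the quantity $\max\{\lambda(a,b):Z\text{ separates }a,b\}$ attached to a dangerous set behaves under unions --- a pair separated by $Z_w$ need not be separated by the union --- and one has to dispose, by a separate argument, of the degenerate case in which two dangerous sets already cover $V(D)\setminus\{v\}$; the rest is routine bookkeeping. This is precisely Mader's directed strengthening of Lov\'asz's splitting-off theorem, and the sole role of the Eulerian hypothesis in it is to make the relevant cut function symmetric --- so that a minimum cut can always be chosen on the side containing $v$ --- and even.
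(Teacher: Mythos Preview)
The paper does not prove this theorem at all: it is quoted as a classical result of Mader and used as a black box in the proof of Theorem~\ref{dirstruct}. So there is no ``paper's own proof'' to compare against.

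As for your sketch, the overall architecture is the right one --- this is exactly the Lov\'asz--Mader splitting-off strategy --- and your preliminary calculations (the effect of a split on $|\delta(X)|$, the definition of a dangerous set, the final count $|\delta(Z')|=|\delta(Z)|-2p$) are correct. You are also right that the Eulerian hypothesis is what makes the undirected cut function symmetric and even, so that dangerous really means tight and the Lov\'asz machinery applies.

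The genuine gap is precisely the one you flag: the uncrossing step. Submodularity alone does not show that $Z_1\cap Z_2$ is dangerous, because the witnesses $(a_i,b_i)$ for $Z_i$ need not be separated by the intersection, and the lower bound you need on $|\delta(Z_1\cup Z_2)|$ is not automatic. Mader's (and Frank's) argument handles this by a careful case split, using both submodularity and posimodularity together with a judicious choice of which pair to uncross; the degenerate case $Z_1\cap Z_2=\{v\}$ also needs its own treatment, as you note. None of this is routine bookkeeping --- it is the heart of the proof --- so as it stands your proposal is an accurate plan rather than a proof. A minor quibble: your handling of a loop at $v$ is slightly off (a loop is a single edge, not a pair to be split), but the fix is trivial: simply delete the loop, which affects no cut.
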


We say that a digraph $D$ is \emph{strongly $k$-edge-connected\/} if $D - S$ is strongly connected for every $S \subseteq E(D)$ with $|S| < k$.  For a digraph $D$ and a vertex $v \in V(D)$ we define an \emph{arborescence with root} $v$ to be a subdigraph $T$ of $D$ which is a spanning tree in the underlying graph of $D$ and has the property that all edges are directed ``away'' from $v$.

\begin{theorem}[Edmonds \cite{Ed1,Ed2}]\label{eds}
Let $D$ be a digraph and let $r_1, \ldots, r_k$ be vertices of $D$ (not necessarily distinct).  If $D$ is strongly $k$-edge-connected then there exist $k$ edge-disjoint arborescences, $F_1, \ldots, F_k$, so that $F_i$ is rooted at $r_i$ for $1 \le i \le k$.
\end{theorem}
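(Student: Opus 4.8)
The plan is to derive Edmonds' theorem from its single-root special case, and to prove the latter by induction on $k$ following Lov\'asz. \emph{Reduction to a single root.} Form a digraph $D'$ by adding a new vertex $r$ to $D$ together with one edge from $r$ to each $r_i$ ($1 \le i \le k$). The hypothesis that $D$ is strongly $k$-edge-connected is equivalent to $d^-_D(X) \ge k$ for every proper nonempty $X \subseteq V(D)$: if some such $X$ had $d^-_D(X) < k$, deleting the fewer than $k$ edges of $\delta^-(X)$ would leave no edge entering $X$, so $D$ would not be strongly connected; conversely deleting fewer than $k$ edges cannot empty an in-cut, so no proper nonempty set becomes a source. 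Hence for every nonempty $X \subseteq V(D)$ we have $d^-_{D'}(X) \ge k$ — clear when $X \subsetneq V(D)$, and when $X = V(D)$ all $k$ new edges enter $X$ — so $D'$ satisfies the single-root hypothesis below with respect to $r$. Given $k$ edge-disjoint spanning arborescences $T_1, \dots, T_k$ rooted at $r$ in $D'$, each $T_j$ contains at least one edge leaving $r$ (it spans $V(D')$ and the only edges out of $r$ go to the $r_i$); since the $T_j$ are edge-disjoint and $r$ has exactly $k$ out-edges, each contains exactly one. Relabelling so that $T_j$ uses $rr_j$, the graph $T_j - r$ is a spanning arborescence of $D$ rooted at $r_j$, and these are edge-disjoint. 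So it suffices to prove: \emph{if $D$ is a digraph, $r \in V(D)$, and $d^-_D(X) \ge k$ for every nonempty $X \subseteq V(D) \setminus \{r\}$, then $D$ has $k$ edge-disjoint spanning arborescences rooted at $r$.}

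\emph{Induction on $k$.} For $k = 1$ the hypothesis says no nonempty $X \subseteq V(D)\setminus\{r\}$ has $\delta^-(X) = \emptyset$, i.e.\ every vertex is reachable from $r$, so a spanning out-tree from $r$ works. For $k \ge 2$ it is enough to find one spanning arborescence $B$ rooted at $r$ such that $D - E(B)$ satisfies the hypothesis with $k$ replaced by $k-1$, and then apply the inductive hypothesis to $D - E(B)$.

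\emph{Building $B$.} Grow $B$ one edge at a time, maintaining a vertex set $S$ with $r \in S$ and an edge set $B$ forming a spanning arborescence of $D[S]$ rooted at $r$, subject to the invariant
\[
d^-_{D - B}(X) \ge k - 1 \qquad \text{for every nonempty } X \subseteq V(D) \setminus \{r\}.
\]
Start with $S = \{r\}$, $B = \emptyset$. Call $X$ with $\emptyset \ne X \subseteq V(D)\setminus\{r\}$ \emph{tight} if $d^-_{D-B}(X) = k-1$. As long as $S \ne V(D)$, I claim there is an edge $e$ of $D - B$ from $S$ to $V(D)\setminus S$ such that no tight set contains the head of $e$ but not its tail; adding such an $e$ to $B$ (and its head to $S$) keeps $B$ a spanning arborescence of the enlarged $D[S]$ and preserves the invariant, since a non-tight set has $d^-_{D-B} \ge k$ and can absorb the loss of one edge. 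When $S = V(D)$ the resulting $B$ is as required, so everything reduces to the claim.

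\emph{The uncrossing (the main obstacle).} Suppose no such $e$ exists. Since $V(D)\setminus S$ is a nonempty subset of $V(D)\setminus\{r\}$, the invariant gives $d^-_{D-B}(V(D)\setminus S) \ge k-1 \ge 1$, so $D - B$ has at least one edge from $S$ to $V(D)\setminus S$, and by assumption each such edge has a tight ``blocker'' — a tight set containing its head but not its tail. From the submodular inequality $d^-(X) + d^-(Y) \ge d^-(X\cap Y) + d^-(X\cup Y)$ together with the invariant, one gets that whenever two tight sets intersect, their union and intersection are tight; in particular distinct maximal tight sets are disjoint and every tight set lies in a unique maximal one. The crux is then to combine the blockers of the frontier edges — for instance by taking a tight set that meets $S$ in as many vertices as possible and showing it must contain all of $S$ — into a single tight set containing $S$, which is impossible since $r \in S$ while tight sets avoid $r$. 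Arranging the combinations so that the sets being uncrossed always intersect is the delicate point; the reduction and the base case and bookkeeping of the earlier steps are routine.
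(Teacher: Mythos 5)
The statement you are proving is one the paper does not prove at all: it is quoted as a classical theorem with citations to Edmonds, and is then used as a black box in the proof of Theorem \ref{dirstruct}. So the comparison here is really about whether your argument stands on its own --- and it does not quite. Your reduction to the single-root case is correct and complete (adding a new root $r$ with one edge to each $r_i$, checking $d^-_{D'}(X)\ge k$ for all nonempty $X\subseteq V(D)$, and peeling off the unique edge of each arborescence leaving $r$), the base case $k=1$ is fine, and the bookkeeping showing that an ``admissible'' frontier edge preserves the invariant is fine. But the entire substance of Edmonds' theorem is concentrated in the one claim you leave unproved: that when $S\ne V(D)$ there exists a frontier edge whose head is not separated from its tail by any tight set. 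You acknowledge this is ``the delicate point,'' and the sketch you give for it does not work as stated.

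Concretely, the endgame you propose --- merging the blockers of the frontier edges into a single tight set containing all of $S$, contradicting $r\in S$ --- has no supporting mechanism. Submodular uncrossing only combines tight sets that \emph{intersect}, and the blockers of distinct frontier edges may be pairwise disjoint; moreover, enlarging a blocker to a maximal tight set can destroy the blocking property (the larger set may absorb the tail of the edge), so ``take a tight set meeting $S$ in as many vertices as possible and show it contains $S$'' is not forced by anything you have established. The known proofs (Lov\'asz's argument, as presented e.g.\ by Schrijver or Frank) conclude differently: they make an extremal choice of a single tight set (maximal or minimal subject to meeting $V\setminus S$ in a prescribed way) and then use a counting argument with $d^-_{D-B}(V\setminus S)\ge k$ versus $d^-_{D-B}(X)=k-1$ to exhibit one specific unblocked frontier edge, rather than building a tight superset of $S$. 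Until you supply that extremal choice and the accompanying count, the induction step --- and hence the theorem --- is not proved; everything else in your write-up is the routine part.
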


\bigskip

\noindent{\it Proof (Theorem \ref{dirstruct}):} Let $D$ be an arbitrary Eulerian digraph, and apply the Gomory-Hu Theorem to the underlying graph to choose a tree $F$ on $V(D)$ and an associated function $\mu$. Let ${\mathcal C}$ be the family of edge-cuts of $D$ that are associated with those edges $e \in E(F)$ with
$\mu(e) < 2t(t - 1)$. We will show that if any block of the resulting vertex partition have size $\geq t$, then $D$ contains a $\vec{K}_t$-immersion. To this end, suppose we have such a block with distinct vertices $ v_1, \ldots, v_t$.  Repeatedly apply Theorem \ref{mader} to split all other vertices in the graph completely.  The resulting digraph has vertex set $\{v_1,\ldots,v_t\}$ and strong edge-connectivity at least $t(t - 1)$.  It then follows from Theorem \ref{eds} that we may choose $t(t-1)$ edge-disjoint arborescences so that exactly $t-1$ of them are rooted at each $v_i$ for $1 \le i \le t$.  Now, for every $i$, $1 \le i \le t$, we may choose from the $t-1$ arborescences rooted at $v_i$ a collection of $t-1$ edge-disjoint directed paths each starting at $v_i$ and ending at a distinct point in $\{v_1,\ldots,v_t \} \setminus \{v_i\}$ (using one arborescence per path).  All together these paths give the desired immersion of $\vec{K}_t$.\hfill $\Box$

\end{document}